\newtheorem{theorem}{Theorem}[section]
\newtheorem{lemma}[theorem]{Lemma}
\newtheorem{corollary}[theorem]{Corollary}
\newtheorem{definition}[theorem]{Definition}
\theoremstyle{definition}
\newcommand{\ba}{\begin{array}}
\newcommand{\ea}{\end{array}}
\def \qed{\cqfd}
\def \O{\Omega}
\def \g{\gamma}
\def \bi{\bar{i}}
\def \bj{\bar{j}}
\def \bk{\bar{k}}
\def \bl{\bar{l}}
\def \C{\mathbb C}
\DeclareMathOperator \diam {diam}
\newcommand{\Vol}{\mathrm{Vol}}
\def\Xint#1{\mathchoice
{\XXint\displaystyle\textstyle{#1}}%
{\XXint\textstyle\scriptstyle{#1}}%
{\XXint\scriptstyle\scriptscriptstyle{#1}}%
{\XXint\scriptscriptstyle\scriptscriptstyle{#1}}%
\!\int}
\def\XXint#1#2#3{{\setbox0=\hbox{$#1{#2#3}{\int}$ }
\vcenter{\hbox{$#2#3$ }}\kern-.58\wd0}}
\def\intav{\Xint-}
\def\qed{\vbox{\hrule
\hbox{\vrule\hbox to 5pt{\vbox to 8pt{\vfil}\hfil}\vrule}\hrule}}
\newcommand{\beg}{\begin{eqnarray*}}
\newcommand{\begn}{\begin{eqnarray}}
\newcommand{\en}{\end{eqnarray*}}
\newcommand{\enn}{\end{eqnarray}}
\newcommand{\tr}{\mbox{\rm tr\,}}
\begin{document}
\title{A mean value formula and a Liouville theorem for the  complex Monge-Amp\`ere equation}
\subjclass[]{32W20}
\keywords{Subharmonic, mean value formula, complex Monge-Amp\`ere equation,
 Liouville type theorem.
}
\author{Chao Li}
\address{School of Mathematical Sciences\\
University of Science and Technology of China\\
Hefei, 230026,P.R. China\\ } \email{leecryst@mail.ustc.edu.cn}
\author{Jiayu Li}
\address{School of Mathematical Sciences\\
University of Science and Technology of China\\
Hefei, 230026\\ and AMSS, CAS, Beijing, 100080, P.R. China\\} \email{jiayuli@ustc.edu.cn}
\author{Xi Zhang}
\address{School of Mathematical Sciences\\
University of Science and Technology of China\\
Hefei, 230026,P.R. China\\ } \email{mathzx@ustc.edu.cn}
\thanks{The authors were supported in part by NSF in
China,  No.11625106, 11571332, 11526212.}

\begin{abstract} In this paper,  we prove a mean value formula for bounded subharmonic Hermitian matrix valued function on a complete Riemannian manifold with nonnegative Ricci curvature. As its application, we obtain a Liouville type theorem  for the complex Monge-Amp\`ere equation
 on product manifolds.
\end{abstract}

\maketitle

\section{Introduction}

\hspace{0.3cm}

Understanding
various spaces of harmonic
functions on complete noncompact Riemannian manifolds is one of the central questions in
geometric analysis. During the last 40 years, there have been many significant progress in this question ( see e.g. \cite{Y1,Y2,CY1,LS,Li86,LT,Li95,CCM95,Wjp}, $\cdots$).
More importantly, the techniques developed in this field are
extremely useful when applied to other problems in geometric analysis.
In \cite{Li95}, Peter Li proved the following theorem:

\medskip

\begin{theorem}[Theorem 2 of \cite{Li95}]\label{li95}Let $(M^n, \omega )$ be a complete K\"ahler manifold with nonnegative
Ricci curvature and $\mathcal{H}^{1}(M)$ be the space of linear growth harmonic functions on $(M^n, \omega )$. Then $dim \mathcal{H}^{1}(M) \leq 2n+1$. Moreover, if $dim \mathcal{H}^{1}(M) = 2n+1$  then $M$ must be isometric to $\mathbb C^n$ with the standard
flat metric.
\end{theorem}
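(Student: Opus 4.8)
The plan is to treat the dimension bound and the rigidity statement separately, using only standard tools for complete manifolds with $\Ric\ge 0$: Bishop--Gromov volume comparison, the Cheng--Yau and Yau gradient estimates, the Li--Schoen mean value inequality for nonnegative subharmonic functions, and the Bochner formula. For the bound I would first reduce to the real case: since $\dim_{\mathbb R}M=2n$, it suffices to prove $\dim\mathcal H^1(M)\le m+1$ on a complete Riemannian $m$-manifold with $\Ric\ge 0$ and then take $m=2n$. Fix $p\in M$, let $K\subseteq\mathcal H^1(M)$ be a $k$-dimensional subspace, and for $R>0$ put $\langle u,v\rangle_R:=\int_{B_R(p)}uv$. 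Choosing a basis orthonormal for $\langle\cdot,\cdot\rangle_R$ and combining the linear growth of the functions in $K$ with the interior gradient estimate and Bishop--Gromov, one gets a doubling-type bound for the trace of $\langle\cdot,\cdot\rangle_R$ between scales $R$ and $2R$; the Colding--Minicozzi counting lemma (or Li's original estimate) then forces $k\le m+1$, the ``$1$'' accounting for the constants and the ``$m$'' for the maximal pointwise rank of the almost-parallel gradient fields of a basis of $K/\mathbb R$. On a K\"ahler manifold there is also a shortcut: if $u$ is harmonic then $\bar\partial u$ is a $\bar\partial$-harmonic $(0,1)$-form, and the Bochner--Kodaira identity with $\Ric\ge 0$ confines the linear-growth such forms to a complex space of dimension $\le n$, giving $2n+1$ after adding the constants.

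For the rigidity, suppose $\dim\mathcal H^1(M)=2n+1$ and pick a basis $1,u_1,\dots,u_{2n}$ with the $u_i$ orthonormal for the limiting bilinear form
\[\langle u,v\rangle_\infty:=\lim_{R\to\infty}\frac1{\Vol(B_R(p))}\int_{B_R(p)}\langle\nabla u,\nabla v\rangle,\]
which, by standard results on linear-growth harmonic functions on manifolds with $\Ric\ge 0$, exists and is positive-definite on $\mathcal H^1(M)/\mathbb R$ (positivity follows from Li--Schoen applied to the subharmonic function $|\nabla u|^2$). For any unit $a\in\mathbb R^{2n}$ the function $u_a:=\sum_i a_iu_i$ is harmonic with linear growth, so the Bochner formula gives
\[\Delta|\nabla u_a|^2=2|\nabla^2u_a|^2+2\Ric(\nabla u_a,\nabla u_a)\ge 0,\]
the Cheng--Yau estimate makes $|\nabla u_a|$ bounded, and the normalization gives $\Vol(B_R(p))^{-1}\int_{B_R(p)}|\nabla u_a|^2\to 1$. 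The plan is to deduce from these three facts that $|\nabla u_a|^2\equiv 1$ for every unit $a$, hence $\langle\nabla u_i,\nabla u_j\rangle\equiv\delta_{ij}$ on $M$; plugging this back into the Bochner identity then forces $\nabla^2u_i\equiv 0$ and $\Ric(\nabla u_i,\nabla u_j)\equiv 0$ for all $i,j$.

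With that in hand, $\nabla u_1,\dots,\nabla u_{2n}$ is a global parallel orthonormal frame, so $M$ is flat and $U=(u_1,\dots,u_{2n})\colon M\to\mathbb R^{2n}$ is a local isometry; by completeness it is a covering of $\mathbb R^{2n}$, hence an isometry. Finally, $J$ is parallel, so it has constant coefficients in the frame $\{\nabla u_i\}$ and is compatible with the flat metric, whence under $U$ it becomes the standard complex structure and $(M,\omega)\cong\mathbb C^n$ with the flat metric.

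The step I expect to be the main obstacle is the passage from ``bounded subharmonic with prescribed average'' to ``constant'' for the functions $|\nabla u_a|^2$. For a general bounded subharmonic function on a manifold with $\Ric\ge 0$ this is false---in the Euclidean-volume-growth case $M$ is nonparabolic and does carry nonconstant bounded subharmonic functions---so the argument must use that there are $2n$ such functions whose gradients form an almost-parallel frame at infinity and whose $\langle\cdot,\cdot\rangle_\infty$-averages are pinned to $1$; making this rigorous appears to need a sharp mean value \emph{identity}, not merely an inequality, for subharmonic functions. A lesser difficulty is already present in the bound: obtaining the sharp constant $m+1$ rather than mere finiteness requires the Colding--Minicozzi counting, Li's refined estimates, or the K\"ahler-specific Bochner--Kodaira argument.
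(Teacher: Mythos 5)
First, a point of reference: the paper does not prove Theorem \ref{li95} at all --- it is quoted from \cite{Li95} purely as motivation --- so there is no in-paper argument to measure yours against. What the paper does supply is the mean value \emph{identity} (Theorem \ref{mvf}, i.e.\ Lemma~B of \cite{Li95}) and its matrix-valued upgrade (Theorem \ref{mvhm} and Corollary \ref{mvhm1}), and these are precisely the tools that make the dimension bound work. Your instinct for that bound (``maximal pointwise rank of the gradient fields'') is right, but neither of your two routes delivers it: the Colding--Minicozzi counting lemma yields finiteness and polynomial bounds, not the sharp constant $m+1$, and the assertion that linear-growth $\bar\partial$-harmonic $(0,1)$-forms form a complex space of dimension at most $n$ is itself the whole difficulty rather than a shortcut. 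The clean argument is: for an $\langle\cdot,\cdot\rangle_\infty$-orthonormal family $u_1,\dots,u_k$ in $\mathcal H^1(M)/\mathbb R$, the Gram matrix $H_{ij}=\langle\nabla u_i,\nabla u_j\rangle$ is a bounded subharmonic matrix-valued function (Bochner plus Cheng--Yau), so Theorem \ref{mvhm} and Corollary \ref{mvhm1} with $F=\det$ give $\intav_{B_r(p)}\det H\to\det I=1$; but if $k>2n$ then $H(x)$ is the Gram matrix of $k$ vectors in the $2n$-dimensional space $T_xM$, so $\det H\equiv 0$, a contradiction. This yields $k\le 2n$ directly and sharply.

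The rigidity half has a genuine gap at exactly the step you flag, and the repair you propose does not close it. The mean value identity says $\lim_{r\to\infty}\intav_{B_r(p)}f=\sup_M f$ for \emph{every} bounded subharmonic $f$ on a manifold with $\Ric\ge 0$, whether or not $f$ is constant; so knowing $\sup_M|\nabla u_a|^2=\lim_r\intav_{B_r(p)}|\nabla u_a|^2=1$ cannot by itself force $|\nabla u_a|^2\equiv 1$. Having $2n$ such functions does not help either: what you can extract is $H\le I$ pointwise together with $\det H\le 1$ and $\intav_{B_r(p)}\det H\to 1$, which pins $\det H$ down only on average, not pointwise. Contrast the paper's proof of Theorem \ref{ltcma}, where the Monge--Amp\`ere equation supplies the \emph{pointwise} identity $\det(u_{i\bar j})=1$, and only then does the linear-algebra fact ``$0<B\le A$ and $\det B=\det A$ imply $B=A$'' apply; no analogous pointwise constraint is available for $H$. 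Note also that your outline makes no use of the complex structure until the final sentence, so if it worked it would prove the real analogue ($\dim\mathcal H^1=m+1$ forces $M\cong\mathbb R^m$) by the same soft means --- that alone signals a missing essential ingredient, since the equality case is where \cite{Li95} must exploit the K\"ahler hypothesis. Your concluding step (a parallel orthonormal frame makes $U=(u_1,\dots,u_{2n})$ a Riemannian covering of $\mathbb R^{2n}$, hence an isometry, and parallelism of $J$ identifies $(M,\omega)$ with flat $\mathbb C^n$) is fine once constancy of $H$ is actually established, but as it stands that constancy is unproved.
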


\medskip

In Peter Li's proof of Theorem \ref{li95},  the following mean value theorem for bounded subharmonic functions plays an important role:

\medskip

\begin{theorem}[Lemma B of \cite{Li95}]\label{mvf}Let $(M, g)$ be a complete manifold with nonnegative Ricci curvature.
Suppose f is a bounded subharmonic function defined on $(M, g)$, then for any $p\in M$
\begin{equation}\lim_{r\rightarrow \infty}\intav_{B_r(p)}f dV_{g}=\sup_M f.\end{equation}
\end{theorem}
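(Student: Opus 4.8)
The plan is to recast the statement in terms of a nonnegative superharmonic function and then apply a scale-invariant weak Harnack inequality, whose availability with radius-independent constants is precisely what the hypothesis $\Ric_g\ge 0$ provides.

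First, since $f$ is bounded, $S:=\sup_M f<\infty$. I would set $u:=S-f$, so that $u\ge 0$, $u$ is bounded, $\Delta u\le 0$ on all of $M$, and $\inf_M u=0$. Because $\intav_{B_r(p)}f\,dV_g=S-\intav_{B_r(p)}u\,dV_g$, the theorem is equivalent to showing $\lim_{r\to\infty}\intav_{B_r(p)}u\,dV_g=0$. (If $\inf_M u$ were attained, the strong maximum principle would force $u\equiv 0$ and the claim would be trivial; in general I only use that for each $\varepsilon>0$ there is a point where $u<\varepsilon$.)

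The analytic core is the following. On a complete $n$-manifold with $\Ric_g\ge 0$ there are constants $p_0=p_0(n)\in(0,1)$ and $C=C(n)$, \emph{independent of the radius and of the center}, such that every nonnegative superharmonic function $u$ on a ball $B_{2\rho}(q)\subset M$ satisfies $\bigl(\intav_{B_\rho(q)}u^{p_0}\,dV_g\bigr)^{1/p_0}\le C\,\inf_{B_{\rho/2}(q)}u$. This is Moser's weak Harnack inequality for supersolutions; it holds with purely dimensional constants because the Bishop--Gromov comparison gives volume doubling and the Buser (or Li--Yau) inequality gives a scale-invariant $L^2$ Poincar\'e inequality, both with constants depending only on $n$ — the corresponding mean value estimates for subharmonic and superharmonic functions on manifolds go back to Li and Schoen \cite{LS}. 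Since $0\le u\le A:=\|u\|_{L^\infty(M)}$, H\"older's inequality then gives, for every ball,
\[
\intav_{B_\rho(q)}u\,dV_g\ \le\ A^{1-p_0}\intav_{B_\rho(q)}u^{p_0}\,dV_g\ \le\ A^{1-p_0}\bigl(C\,\inf_{B_{\rho/2}(q)}u\bigr)^{p_0}.
\]

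To conclude, fix $p$ and $\varepsilon>0$, and choose $q_\varepsilon\in M$ with $u(q_\varepsilon)<\varepsilon$. Applying the last estimate with $q=p$ and $\rho=r$, and using that $\inf_{B_{r/2}(p)}u\le u(q_\varepsilon)<\varepsilon$ whenever $r>2\,\dist(p,q_\varepsilon)$, one gets $\intav_{B_r(p)}u\,dV_g\le A^{1-p_0}(C\varepsilon)^{p_0}$ for all such $r$; letting $r\to\infty$ and then $\varepsilon\to0$ (and using $u\ge0$) yields $\lim_{r\to\infty}\intav_{B_r(p)}u\,dV_g=0$, as required. The one genuinely non-elementary ingredient is the weak Harnack inequality with constants that do not depend on the radius — this scale invariance is exactly what nonnegativity of the Ricci curvature buys, and without it the passage $r\to\infty$ could not be controlled; everything else (the reduction to $u$, the interpolation exploiting the boundedness of $f$, and the final limit) is routine.
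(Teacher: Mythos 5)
Your argument is correct, but there is nothing in the paper to compare it against: the statement is quoted verbatim as Lemma~B of \cite{Li95} and used as a black box, with no proof given here. On its own merits, your route — reduce to the nonnegative bounded superharmonic function $u=\sup_M f-f$ with $\inf_M u=0$, apply a scale-invariant weak Harnack inequality on $B_r(p)$, and exploit the $L^\infty$ bound via the pointwise inequality $u\le A^{1-p_0}u^{p_0}$ (this is just integration of a pointwise bound, not H\"older, but the inequality is correct) — is complete and correctly isolates the one nontrivial input, namely that the weak Harnack constants are independent of radius and center. That uniformity does hold under $\Ric_g\ge 0$, by Bishop--Gromov volume doubling plus the scale-invariant Neumann--Poincar\'e inequality of Buser (equivalently, it is contained in the $L^p$, $p<1+2/n$, estimates for nonnegative supersolutions in Li--Schoen \cite{LS}, which is the cleanest citation for exactly the inequality you use). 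This is genuinely different from Li's original argument, which runs through the heat semigroup following \cite{Li86}: one evolves $f$ by the heat equation, uses subharmonicity and the maximum principle to see that the evolution increases monotonically to the constant $\sup_M f$, and then converts the statement about $e^{t\Delta}f$ into one about ball averages via the Li--Yau heat kernel bounds. Your elliptic proof avoids the heat kernel entirely at the cost of invoking Moser iteration on manifolds; the parabolic proof gives more (e.g.\ the large-time behavior of $e^{t\Delta}f$). The only point you should add a word about is regularity: the theorem assumes $f$ subharmonic in the potential-theoretic or distributional sense, whereas the Moser weak Harnack is usually stated for $W^{1,2}_{\mathrm{loc}}$ supersolutions, so one should either smooth $u$ or invoke the version of the inequality valid for lower semicontinuous superharmonic functions. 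In the present paper this is harmless, since the theorem is only ever applied to smooth functions such as $\xi A\xi^*$.
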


Despite the application in \cite{Li95}, Theorem \ref{mvf} has some more applications in the study of Riemannian geometry (see e.g. \cite{CCM95}). It is a useful tool in the study of linearly growth harmonic functions on complete Riemannian manifolds with nonnegative Ricci curvature.

In this paper, we study a class of Hermitian matrix valued functions and establish a mean value theorem to them. For convenience, we denote the set of all $m$-order Hermitian matrices by $\mathbf{Hm}(m)$, and equip it with the metric induced by the inner product
\begin{equation}\langle A,B\rangle=\tr A\overline{B}.\end{equation}

\begin{definition} A map $A=(A_{ij})$ from a Riemannian manifold to $\mathbf{Hm}(m)$ is said to be subharmonic, if for any vector $\xi=(\xi_1,\cdots,\xi_m)\in \mathbb{C}^m$, $\xi A\xi^*=A_{ij}\xi_i\overline{\xi_j}$ is a subharmonic function.
\end{definition}

By the definition, it is easy to check that a $\mathcal{C}^2$ Hermitian  matrix valued function $A=(A_{ij})$ on a Riemannian manifold is subharmonic if and only if $\Delta A=(\Delta A_{ij})$ is semi-positive-defined everywhere. We obtain the following mean value formula to subharmonic Hermitian matrix valued functions.

\medskip

\begin{theorem}\label{mvhm} Let $(M, g)$ be a complete Riemannian manifold with nonnegative Ricci curvature, and $A=(a_{ij})$
be a bounded subharmonic Hermitian  matrix valued function on $(M, g)$. Then there exits a Hermitian  matrix $A_0$, such that
\begin{equation}A\leq A_0,\end{equation}
on M, and for any $p\in M$
\begin{equation}\label{mvf1}\lim_{r\rightarrow \infty}\intav_{B_r(p)} A dV_{g}=\lim_{r\rightarrow \infty}\left(\intav_{B_r(p)}a_{ij} dV_{g}\right)= A_0.\end{equation}
\end{theorem}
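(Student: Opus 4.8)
The plan is to reduce everything to the scalar mean value formula (Theorem \ref{mvf}) by testing $A$ against fixed vectors, and then to recover convergence of the matrices themselves by polarization together with a compactness argument in the finite-dimensional space $\mathbf{Hm}(m)$.

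First I would fix $\xi=(\xi_1,\dots,\xi_m)\in\mathbb C^m$ and set $f_\xi:=\xi A\xi^*=a_{ij}\xi_i\overline{\xi_j}$. By the definition of a subharmonic Hermitian matrix valued function, $f_\xi$ is subharmonic, and since $A$ is bounded, $f_\xi$ is a bounded subharmonic function on $(M,g)$. Theorem \ref{mvf} then gives, for every $p\in M$,
\[
\lim_{r\to\infty}\intav_{B_r(p)}f_\xi\,dV_g=\sup_M f_\xi=:q(\xi),
\]
a limit which in particular does not depend on $p$. Writing $M_r(p):=\intav_{B_r(p)}A\,dV_g$, which lies in $\mathbf{Hm}(m)$ as an average of Hermitian matrices, we obtain $\xi M_r(p)\xi^*=\intav_{B_r(p)}f_\xi\,dV_g\to q(\xi)$ for every $\xi\in\mathbb C^m$.

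Next I would upgrade this to convergence of the matrices. Since $|A|\le C$ on $M$ for some constant $C$, the family $\{M_r(p)\}_{r>0}$ is contained in a fixed bounded subset of the finite-dimensional space $\mathbf{Hm}(m)$, hence is precompact. If $A_0$ is any subsequential limit along some $r_k\to\infty$, then passing to the limit in $\xi M_{r_k}(p)\xi^*\to q(\xi)$ yields $\xi A_0\xi^*=q(\xi)$ for all $\xi$; since a Hermitian matrix is determined by its associated (real) quadratic form $\xi\mapsto\xi A_0\xi^*$ via polarization, $A_0$ is unique and, thanks to the $p$-independence of $q$, independent of $p$. Consequently the full limit exists and $\lim_{r\to\infty}\intav_{B_r(p)}A\,dV_g=A_0$, which is (\ref{mvf1}); concretely one may also read off the entries by taking $\xi=e_i$ for the diagonal and $\xi=e_i\pm e_j$, $\xi=e_i\pm\sqrt{-1}\,e_j$ for the off-diagonal terms.

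Finally, the inequality $A\le A_0$ is immediate: for every $x\in M$ and every $\xi\in\mathbb C^m$,
\[
\xi A(x)\xi^*=f_\xi(x)\le\sup_M f_\xi=q(\xi)=\xi A_0\xi^*,
\]
so $A_0-A(x)$ is positive semi-definite, i.e. $A\le A_0$ on $M$. I do not expect a genuine obstacle here: the analytic content is entirely carried by Theorem \ref{mvf}, and the only points requiring a little care are the $p$-independence of the scalar limit (used to ensure $A_0$ does not depend on $p$) and the elementary fact that a Hermitian matrix is recovered from its quadratic form, which makes the subsequential limit unique.
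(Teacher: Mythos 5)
Your proposal is correct and follows essentially the same route as the paper: both reduce to the scalar mean value formula (Theorem \ref{mvf}) applied to $\xi A\xi^*$, recover the matrix limit by polarization (the paper writes the identity with $e_i\pm e_j$ and $e_i\pm\sqrt{-1}\,e_j$ explicitly, while you phrase it as uniqueness of subsequential limits of the bounded family of averages, which is the same content), and deduce $A\le A_0$ from $\xi A\xi^*\le\sup_M\xi A\xi^*=\xi A_0\xi^*$. No further comment is needed.
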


\medskip

The complex Monge-Amp\`ere equation has  significant applications in complex analysis and complex geometry, and many remarkable progresses of
 complex Monge-Amp\`ere
equations  were carried out by many people (see e.g. \cite{Au,Y78,BT,BT1,CY,MY,Kob,CKNS,TY1,TY2,TY3,TY4,Ti0,Ti1,Ti2,Ko,Gb,Bl1,GPF,ZZ,TWY,DP,DZhZh,ZhZh,WY}, $\cdots$). In this paper, we concentrate on Liouville theorems for the complex Monge-Amp\`ere equation.
In \cite{RS84}, Riebesehl and Schulz proved a Liouville theorem for the complex Monge-Amp\`ere equation on $\mathbb C^{n}$, which can be expressed by K\"ahler forms as following.

\medskip

\begin{theorem}[\cite{RS84}]\label{ltcma} Let $\omega$ be a K\"ahler form on $\mathbb{C}^n$ satisfy
$C^{-1}\omega_0\leq \omega \leq C\omega_0$
and
$\omega^n=\omega_0^n$,
where $\omega_0=\frac{\sqrt{-1}}{2}\sum\limits_{i=0}^n dz^i\wedge d\bar{z}^i$ and $C$ is a positive constant.
Then $\nabla_{\omega_0}\omega=0$, or equivalently
\begin{equation}\omega=\frac{\sqrt{-1}}{2}\sum\limits_{i,j=1}^n A_{ij}dz^i\wedge d\bar{z}^j \end{equation}
for some constant Hermitian matrix $(A_{ij})$.
\end{theorem}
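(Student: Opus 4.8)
The plan is to read Theorem \ref{ltcma} off from the matrix mean value formula of Theorem \ref{mvhm}, applied on $(\mathbb{C}^n,g)$ where $g$ is the K\"ahler metric determined by $\omega$ itself. Since $\mathbb{C}^n$ is contractible (indeed Stein), the closed real $(1,1)$-form $\omega$ is globally $\partial\bar\partial$-exact, $\omega=\sqrt{-1}\,\partial\bar\partial\phi$ for a smooth real $\phi$; thus, with the normalizations of the statement, $A_{ij}=2\phi_{i\bar j}$, $g_{i\bar j}:=\phi_{i\bar j}$ is a K\"ahler metric, and $\omega^n=\omega_0^n$ reads $\det(\phi_{i\bar j})\equiv\mathrm{const}$, equivalently $\det A\equiv 1$. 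The pinching $C^{-1}\omega_0\le\omega\le C\omega_0$ says $C^{-1}I\le A\le CI$; in particular $g$ is uniformly equivalent to the Euclidean metric, so $(\mathbb{C}^n,g)$ is complete, and the constancy of $\det(\phi_{i\bar j})$ gives $\Ric_g=-\sqrt{-1}\,\partial\bar\partial\log\det(g_{i\bar j})=0\ge 0$.

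Step 1 is to verify that $A=(A_{ij})$ is a bounded subharmonic Hermitian matrix valued function on $(\mathbb{C}^n,g)$. Boundedness is $C^{-1}I\le A\le CI$. For subharmonicity, fix $\xi\in\mathbb{C}^n$ and set $\psi_\xi:=\sum_{i,j}\xi^i\overline{\xi^j}g_{i\bar j}$. Differentiating $\log\det(g_{i\bar j})\equiv\mathrm{const}$ once in $z^i$ and once in $\bar z^j$, and using that all higher partial derivatives of $\phi$ are symmetric in their holomorphic slots and in their antiholomorphic slots (this is where the K\"ahler condition, i.e.\ the closedness of $\omega$, enters), a computation gives the pointwise identity
\[
\Delta_g\psi_\xi=g^{p\bar q}\,\partial_p\partial_{\bar q}\psi_\xi=g^{p\bar s}g^{r\bar q}\,V_{p\bar q}\,\overline{V_{s\bar r}}=|V|_g^2\ge 0,\qquad V_{p\bar q}:=\sum_i\xi^i\,\partial_i g_{p\bar q},
\]
so $\Delta_g A\ge 0$ in the order of Hermitian matrices, i.e.\ $A$ is $g$-subharmonic. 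Theorem \ref{mvhm} then applies on $(\mathbb{C}^n,g)$ and yields a constant Hermitian matrix $A_0$ with $A\le A_0$ on $\mathbb{C}^n$ and $\intav_{B^g_r(p)}A\,dV_g\to A_0$ as $r\to\infty$.

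Step 2 --- which I expect to be the main obstacle --- is to upgrade $A\le A_0$ to $A\equiv A_0$; note that it does \emph{not} suffice to observe that each $\psi_\xi$ is a bounded subharmonic function, since a bounded subharmonic function on a complete manifold with $\Ric\ge 0$ need not be constant, so the equation $\det A\equiv 1$ must be used. The key remark is that the bound $A\le A_0$ is one-sided in the matrix order, so $A_0-A$ is positive semidefinite and hence $|A_0-A|\le c_n\,\tr(A_0-A)$ pointwise. Averaging over $B^g_r(p)$ and using $\tr\intav_{B^g_r(p)}A\,dV_g\to\tr A_0$ gives $\intav_{B^g_r(p)}|A-A_0|\,dV_g\to 0$. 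Since $A$ and $A_0$ take values in the bounded set $\{C^{-1}I\le X\le CI\}$, on which $X\mapsto\det X$ is Lipschitz, this yields $\intav_{B^g_r(p)}|\det A-\det A_0|\,dV_g\to 0$; but $\det A\equiv 1$, so $\det A_0=1$. Finally, $A\le A_0$ makes $A_0^{-1/2}AA_0^{-1/2}\le I$ with determinant $\det A/\det A_0=1$, so all its eigenvalues equal $1$, whence $A\equiv A_0$. Thus $\omega$ has constant coefficients, which is exactly $\nabla_{\omega_0}\omega=0$.

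Points I would need to be careful about: the global representation $\omega=\sqrt{-1}\,\partial\bar\partial\phi$ on $\mathbb{C}^n$ (standard, from vanishing of the relevant Dolbeault/de Rham cohomology); the index and sign bookkeeping in the identity for $\Delta_g\psi_\xi$; and the check that the hypotheses of Theorem \ref{mvhm} are genuinely met by $(\mathbb{C}^n,g)$ --- which is where completeness is needed, and that is precisely what the pinching hypothesis provides.
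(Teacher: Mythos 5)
Your proposal is correct and follows essentially the same route as the paper: show the coefficient matrix of $\omega$ is a bounded subharmonic Hermitian matrix valued function on the complete Ricci-flat manifold $(\mathbb{C}^n,\omega)$, apply Theorem \ref{mvhm} to get a constant matrix bound $A\leq A_0$, pass the mean value limit through $\det$ (your Lipschitz/trace argument is exactly the content of Corollary \ref{mvhm1} specialized to $F=\det$), and conclude from $\det A=\det A_0$ with $A\leq A_0$ that $A\equiv A_0$. The only cosmetic difference is that you work with a global potential $\phi$ while the paper uses the closedness of $\omega$ directly for the index symmetries; the computations are identical.
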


\medskip

The key of the proof of Theorem \ref{ltcma} is a local Calabi $\mathcal{C}^3$ estimate, i.e. an estimate on $|\nabla_{\omega_0}\omega|_{w}^2$.
However, when considering analogous Liouville type theorems on complete K\"ahler manifolds with non trivial Riemanian curvature, the Calabi $\mathcal{C}^3$ estimate seems not to work.
Recently, Hein (\cite{H17}) proved a Liouville theorem for the complex Mong-Amp\`ere eqution on product manifolds, which can be expressed in short as:

\medskip

\begin{theorem}[Theorem A of \cite{H17}]\label{hein17}Let $(Y,\omega_{Y_0})$ be a compact Ricci-flat K\"ahler manifold. Let $\omega$ be a Ricci-flat
K\"ahler form on $\mathbb C^m\times Y$. Assume that $C^{-1}(\omega_{\mathbb C^m} + \omega_{Y_0})\leq \omega \leq C(\omega_{\mathbb C^m} + \omega_{Y_0})$ for some $C>1$, where $\omega_{\mathbb C^m}$ is the standard flat K\"ahler form on $\mathbb C^m$. Then we can find some K\"ahler form $\omega_{Y}$ on $Y$, $T_{l}\in Auto(\mathbb C^m\times Y)$ and complex linear map $S\in Auto(\mathbb C^m)$ such that:
\begin{equation*}T_l^*\omega=\omega_Y+S^*\omega_{C^m}.\end{equation*}
\end{theorem}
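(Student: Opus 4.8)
The plan is to apply Theorem~\ref{mvhm} to the complete Ricci-flat manifold $(\mathbb{C}^m\times Y,\omega)$, taking for the matrix-valued function the $\mathbb{C}^m$-block of $\omega$, and then to promote the resulting mean value identity to genuine rigidity with the help of the Monge-Amp\`ere equation.

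\medskip

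Set $X=\mathbb{C}^m\times Y$ and let $\pi\colon X\to Y$ be the projection. Since $\mathbb{C}^m$ is contractible, $\pi^*\colon H^2(Y;\mathbb{R})\to H^2(X;\mathbb{R})$ is an isomorphism, so $[\omega]=\pi^*\mu$ for a K\"ahler class $\mu$ on $Y$; let $\omega_Y$ be its Ricci-flat representative (Yau) and put $\omega_0=\omega_{\mathbb{C}^m}+\pi^*\omega_Y$, a complete product Ricci-flat K\"ahler metric uniformly equivalent to $\omega$. By the $\partial\bar\partial$-lemma on $X$ write $\omega=\omega_0+\sqrt{-1}\,\partial\bar\partial\varphi$; the hypothesis $C^{-1}\omega_0\le\omega\le C\omega_0$ makes $g=g_\omega$ uniformly equivalent to $g_0=g_{\omega_0}$, so all coefficients $\varphi_{A\bar B}$ are bounded. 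Ricci-flatness of $\omega$ and of $\omega_0$ forces $\log(\det g/\det g_0)$ to be pluriharmonic on $X$; restricting to the slices $\mathbb{C}^m\times\{y\}$, using that a bounded pluriharmonic function on $\mathbb{C}^m$ is constant, and using compactness of $Y$, it is a constant, so $\varphi$ solves $(\omega_0+\sqrt{-1}\,\partial\bar\partial\varphi)^{m+n}=e^{c}\,\omega_0^{m+n}$ with $c$ constant. By the interior a priori estimates for this equation (Evans--Krylov and Schauder, combined with the scaling of the $\mathbb{C}^m$ factor) $\omega$ has bounded geometry; in particular $(X,\omega)$ is complete, Ricci-flat, hence of nonnegative Ricci curvature, and of Euclidean volume growth $\Vol B_r(p)\asymp r^{2m}$.

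\medskip

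For $\xi=(\xi_1,\dots,\xi_m)\in\mathbb{C}^m$ the vector field $\partial_\xi=\sum_\alpha\xi_\alpha\,\partial/\partial z^\alpha$ is holomorphic on $X$, with bounded norm $|\partial_\xi|^2_\omega=\sum_{\alpha,\beta}g_{\alpha\bar\beta}\xi_\alpha\overline{\xi_\beta}$. The Bochner formula on the Ricci-flat K\"ahler manifold $(X,\omega)$ gives $\Delta_\omega|\partial_\xi|^2_\omega=|\nabla^\omega\partial_\xi|^2_\omega\ge 0$; equivalently, differentiating the Monge-Amp\`ere equation twice along $\partial_\xi$ yields $\Delta_\omega\big(\sum_{\alpha,\beta}\varphi_{\alpha\bar\beta}\xi_\alpha\overline{\xi_\beta}\big)=\big\|g^{-1/2}(\partial_\xi g)\,g^{-1/2}\big\|^2\ge 0$, where $\partial_\xi g$ denotes the matrix $(\partial_\xi g_{A\bar B})$. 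Hence the matrix-valued function $A:=(g_{\alpha\bar\beta})$ is a bounded subharmonic map $X\to\mathbf{Hm}(m)$, and Theorem~\ref{mvhm} produces a constant Hermitian matrix $A_0$ with $A\le A_0$ on $X$ and $\intav_{B_r(p)}A\,dV_\omega\to A_0$ for every $p$; by Theorem~\ref{mvf} this already gives $\sup_X(\xi A\xi^*)=\xi A_0\xi^*$ for every $\xi$.

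\medskip

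The main difficulty is to upgrade $A\le A_0$ to $A\equiv A_0$, i.e.\ to show that each $|\partial_\xi|^2_\omega$ is constant; the purely geometric information ``bounded subharmonic with averages converging to the supremum'' does not by itself force constancy, so the Monge-Amp\`ere equation must be used once more. I would integrate the Bochner identity against a cut-off to obtain $\int_{B_r}|\nabla^\omega\partial_\xi|^2_\omega=\int_{\partial B_r}\partial_\nu|\partial_\xi|^2_\omega$ and combine it with the average convergence above and with the remaining Ricci-flat equations (the mixed and fibre components of $\Ric(\omega)=0$, which couple the $z$-derivatives of $g$ to its $y$-derivatives) in order to force $\nabla^\omega\partial_\xi\equiv 0$; equivalently, one shows that $|\partial_\xi|^2_\omega$ attains its supremum and concludes by the strong maximum principle for subharmonic functions. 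Granting this, $\partial/\partial z^1,\dots,\partial/\partial z^m$ are parallel holomorphic vector fields on $(X,\omega)$; since the components $\sum_{B}g^{A\bar B}(\partial_{z^\gamma}g_{\alpha\bar B})\xi_\alpha$ and $\sum_{B}g^{A\bar B}(\partial_{y^i}g_{\alpha\bar B})\xi_\alpha$ of $\nabla^\omega_{\cdot}\partial_\xi$ vanish for all $\xi$, every $g_{\alpha\bar B}$ is constant, so the span of the $\partial/\partial z^\alpha$ and its $\omega$-orthogonal complement are parallel holomorphic distributions and the de Rham/K\"ahler decomposition splits $\omega$ off a flat factor. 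Writing $A_0=S^*S$ with $S\in GL_m(\mathbb{C})$, this reads $\omega=S^*\omega_{\mathbb{C}^m}+\omega_{Y'}$ for some Ricci-flat K\"ahler form $\omega_{Y'}$ on a manifold biholomorphic to $Y$; absorbing that biholomorphism, together with the possible constant ``shear'' coming from nonzero constants $g_{\alpha\bar j}$ along a torus factor of $Y$, into an element $T_l\in\mathrm{Aut}(\mathbb{C}^m\times Y)$ yields $T_l^*\omega=\omega_Y+S^*\omega_{\mathbb{C}^m}$.
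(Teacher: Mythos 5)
You are proving a statement that the paper itself only quotes from \cite{H17}; its own argument for this situation is the proof of Theorem \ref{mth}, which the authors present as subsuming Theorem \ref{hein17}, so that proof is the right benchmark. Your setup tracks it closely: the reduction to a Monge--Amp\`ere equation with constant right-hand side, and the observation that the $\mathbb{C}^m$-block of the metric is a bounded subharmonic $\mathbf{Hm}(m)$-valued map (Bochner plus Ricci-flatness, exactly the paper's (\ref{luij2}) and Remark 1) after Theorem \ref{ltcma}), so that Theorem \ref{mvhm} yields a constant matrix $A_0$ with $A\le A_0$ and converging averages.

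The proof then stops at the decisive point. You say yourself that ``bounded subharmonic with averages converging to the supremum'' does not force constancy --- correctly: $\max(-|x|^{2-N},-1)$ on $\mathbb{R}^N$, $N\ge 3$, is bounded, subharmonic, has averages converging to its supremum $0$, and is not constant --- and the paragraph meant to repair this (``I would integrate the Bochner identity against a cut-off \dots in order to force $\nabla^\omega\partial_\xi\equiv 0$; equivalently, one shows that $|\partial_\xi|^2_\omega$ attains its supremum'') is a statement of intent, not an argument; attaining the supremum is precisely what is unavailable. The paper closes this gap with a different device: it applies Theorem \ref{mvhm} to the \emph{inverse} block $u^{i\bar j}=\frac12 g^{-1}(dz^i,d\bar z^j)$, uses Corollary \ref{mvhm1} with $F=\det$ to get $\intav_{B_r}\det(u^{i\bar j})\,dV\to\det A$, and then exploits the pointwise identity (\ref{vol}), $\det(u^{i\bar j})=c\,(i_z^*\omega)^n/\omega_Y^n$, together with the fact that $i_z^*\omega$ and $\omega_Y$ are cohomologous on the compact fibre, to conclude that the average of $\det(u^{i\bar j})$ over every slab $B_r(z_0)\times Y$ equals the constant $c$ identically in $r$. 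Hence $\det(u^{i\bar j})\equiv\det A$, and since $0<(u^{i\bar j})\le A$ this forces $(u^{i\bar j})\equiv A$; only then do the identities (\ref{cenn1})--(\ref{cenn3}) give $\nabla_{g_0}g=0$ and the splitting you describe in your last paragraph. This fibrewise-constant-average trick is the substitute for the identity $\det(u_{i\bar j})\equiv 1$ available in the $\mathbb{C}^n$ case of Theorem \ref{ltcma}, and it does not directly apply to the block $(g_{\alpha\bar\beta})$ you chose to work with; without it, or some genuine replacement, your argument does not go through.
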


\medskip

In Hein's proof of Theorem \ref{hein17}, one key step is to study the converging property of a sequence of subharmonic functions $u_t$ with respect to K\"ahler metrics $\omega_t$ which are constructed from $\omega$. In this paper, we consider  the case that $Ric (\omega_{Y_{0}})\geq 0$ and establish the following Liouville theorem:

\medskip

\begin{theorem}\label{mth}Let $(Y^n,\omega_{Y_0})$ be an $n$ dimensional compact K\"ahler manifold with nonpositive Ricci curvature, and let $\omega$ be a K\"ahler form $\omega$ on $\mathbb{C}^m\times Y$ with properties
\begin{itemize}
\item[1).]$C^{-1}(\omega_{\mathbb{C}^m}+\omega_{Y_0})\leq \omega \leq C(\omega_{\mathbb{C}^m}+\omega_{Y_0})$, for some positive constant $C$;
\item[2).]$\omega^{n+m}=(\omega_{\mathbb{C}^m}+\omega_{Y_0})^{m+n}$.
\end{itemize}
where $\omega_{\mathbb{C}^m}$ is the standard K\"ahler form on $\mathbb{C}^m$.
Then there exists a K\"ahler form $\omega_Y$ on $Y$ with $Ric(\omega_Y)=Ric(\omega_{Y_0})$ such that $\nabla_{\omega_{\mathbb{C}^m}+\omega_Y}\omega=0$.
Furthermore, we have the
the following presentation of $\omega$
\begin{equation}\label{exs}\omega=\hat{\omega}_{\mathbb{C}^n}+\omega_Y+\frac{1}{2}\sum\limits_{i=1}^m(dz^i\wedge \eta^i+d\bar{z}^i\wedge\overline{\eta^i})\end{equation}
where $\hat{\omega}_{\mathbb{C}^n}=\frac{1}{2}\sum\limits_{i,j=1}^m u_{i\bj}dz^i\wedge d\bar{z}^j$ with the constant Hermitian matrix $(u_{i\bj})$, and every $\eta^i$ is a $\omega_Y$-paralleled (0,1)-form.
\end{theorem}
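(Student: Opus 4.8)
The plan is to realize $(\mathbb C^m\times Y,\omega)$ as a complete Riemannian manifold of nonnegative Ricci curvature, to apply Theorem~\ref{mvhm} to the Hermitian matrix recording $\omega$ in the $\mathbb C^m$-directions, and then to use the Monge-Amp\`ere equation to upgrade the one-sided bound it produces to an equality; the presentation (\ref{exs}) then follows from the K\"ahler identities.

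\emph{Step 1: setup and reduction.} Write $M=\mathbb C^m\times Y$, $\omega_B=\omega_{\mathbb C^m}+\omega_{Y_0}$, let $\pi_Y\colon M\to Y$ be the projection, and let $g,g_B$ be the Hermitian metrics of $\omega,\omega_B$. Hypothesis 2) says $\det g=\det g_B$ in any holomorphic chart, hence $\mathrm{Ric}(\omega)=-\sqrt{-1}\,\partial\bar\partial\log\det g=-\sqrt{-1}\,\partial\bar\partial\log\det g_B=\mathrm{Ric}(\omega_B)=\pi_Y^{*}\mathrm{Ric}(\omega_{Y_0})$, the $\mathbb C^m$-factor being flat; by the curvature assumption on $(Y,\omega_{Y_0})$ this is nonnegative, and by hypothesis 1) $\omega$ is uniformly equivalent to the complete product metric $\omega_B$, so $(M,\omega)$ is complete. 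The coordinate fields $\partial_i:=\partial/\partial z^i$ are global holomorphic vector fields on $M$ tangent to the $\mathbb C^m$-factor, so $\mathrm{Ric}(\omega)(\partial_i,\overline{\partial_j})=0$ since $\mathrm{Ric}(\omega)=\pi_Y^{*}\mathrm{Ric}(\omega_{Y_0})$ carries only $Y$-legs. The Bochner formula $\Delta_\omega|V|_\omega^{2}=|\nabla_\omega V|_\omega^{2}-\mathrm{Ric}(\omega)(V,\bar V)$ for holomorphic vector fields gives, for every $\xi\in\mathbb C^m$ and $\Xi=\sum_i\xi^i\partial_i$,
\[
\Delta_\omega\bigl(\xi\,\mathcal G\,\xi^{*}\bigr)=\Delta_\omega|\Xi|_\omega^{2}=|\nabla_\omega\Xi|_\omega^{2}\ \ge\ 0,
\]
where $\mathcal G=(g_{i\bar j})$, $g_{i\bar j}=\omega(\partial_i,\overline{\partial_j})$, is the ``$\mathbb C^m$-block'' of the metric: a globally defined $\mathbf{Hm}(m)$-valued function, bounded by hypothesis 1) (and $\ge C^{-1}\mathrm{Id}>0$). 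Thus $\mathcal G$ is a bounded subharmonic Hermitian matrix valued function on $(M,\omega)$, and Theorem~\ref{mvhm} yields a constant, positive definite Hermitian matrix $A_0$ with $\mathcal G\le A_0$ on $M$ and $\intav_{B_r(p)}\mathcal G\,dV_\omega\to A_0$. It suffices to prove that $\mathcal G$ is constant (necessarily equal to $A_0$): then $\Delta_\omega(\xi\mathcal G\xi^{*})\equiv0$, so $|\nabla_\omega\Xi|_\omega^{2}\equiv0$, so $\nabla_\omega\partial_i=0$ for every $i$.

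\emph{Step 2: constancy of $\mathcal G$ --- the main obstacle.} This is the heart of the matter, and it is where the Monge-Amp\`ere equation must be used essentially: the mean value formula alone cannot close the argument, since bounded subharmonic functions on $\mathbb C^m\times Y$ (of real dimension $\ge 4$, hence in general non-parabolic) need not be constant. Dually, the holomorphic $(1,0)$-forms $dz^i$ satisfy $\Delta_\omega|\theta|_\omega^{2}=|\nabla_\omega\theta|_\omega^{2}+\mathrm{Ric}(\omega)(\theta^{\sharp},\overline{\theta^{\sharp}})\ge0$ (again because $\mathrm{Ric}(\omega)\ge0$), so the inverse block $\mathcal G^{*}=(g^{i\bar j})$ --- bounded because $g^{-1}$ is uniformly equivalent to $g_B^{-1}$ --- is also a bounded subharmonic Hermitian matrix valued function, and Theorem~\ref{mvhm} gives a constant Hermitian $B_0$ with $\mathcal G^{*}\le B_0$; the Schur-complement inequality $\mathcal G^{*}\ge\mathcal G^{-1}$ then sandwiches $B_0^{-1}\le\mathcal G\le A_0$. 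To pass to $\mathcal G\equiv A_0$ one brings in the Monge-Amp\`ere constraint through the block-determinant identity $\det\mathcal G\cdot\det\mathcal S=\det g=\det g_B$, where $\mathcal S$ is the Schur complement of $\mathcal G$ in $g$ (the ``effective $Y$-metric''), again two-sided bounded by 1); this forces $\det\mathcal G\cdot\det\mathcal G^{*}\ge1$, with equality precisely when the mixed $\mathbb C^m$-$Y$ block of $g$ vanishes. Combined with the \emph{sharpness} of the mean value formula --- $\intav_{B_r}\mathcal G\to A_0=\sup_M\mathcal G$ and $\intav_{B_r}\mathcal G^{*}\to B_0=\sup_M\mathcal G^{*}$ --- and the concavity of $\log\det$ and operator convexity of $X\mapsto X^{-1}$, this should pin every average to its supremum and give $\mathcal G\equiv A_0$ (and $\mathcal S$ constant). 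An alternative, closer to Hein's method in \cite{H17}, is a dilation argument in the $\mathbb C^m$-variable, blowing $\omega$ down to a limit in which the averaged identity becomes pointwise. I expect this Step --- turning the averaged conclusions of Theorems~\ref{mvhm}--\ref{mvf} into the pointwise equality $\mathcal G\equiv A_0$ --- to be the only genuine difficulty; the remaining steps are routine.

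\emph{Step 3: the splitting.} Granting $\nabla_\omega\partial_i=0$: since the Chern (=Levi-Civita) connection of a K\"ahler metric annihilates $(0,1)$-vectors in $(1,0)$-directions, and using the K\"ahler symmetry $\partial_\mu g_{\nu\bar\rho}=\partial_\nu g_{\mu\bar\rho}$, one gets $\partial_{z^k}g_{\alpha\bar\beta}=0$ for all $\alpha,\beta$ (so $g$ is independent of $z$) and $\partial_{y^a}g_{i\bar b}=0$. Hence $g_{i\bar j}=:u_{i\bar j}$ is a constant Hermitian matrix, producing the summand $\hat\omega_{\mathbb C^m}=\tfrac12\sum u_{i\bar j}\,dz^i\wedge d\bar z^j$; the coefficients $g_{i\bar a},g_{a\bar b}$ depend only on $y$, so $\omega_Y:=\tfrac12\sum g_{a\bar b}\,dy^a\wedge d\bar y^b$ is a K\"ahler form on $Y$, and each $\eta^i:=\tfrac12\sum_a g_{i\bar a}\,d\bar y^a$ is a $(0,1)$-form on $Y$ with $\partial\eta^i=0$, i.e. $\overline{\eta^i}$ is a holomorphic $1$-form. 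Moreover the $z$-independent bounded subharmonic matrix $\mathcal G^{*}$ is, on each compact fiber $\{z_0\}\times Y$, constant by the maximum principle (each $\xi\mathcal G^{*}\xi^{*}$ is there a subsolution of a uniformly elliptic operator), hence constant on $M$; since $\det\mathcal G^{*}=\det g_{a\bar b}/\det g=\det g_{a\bar b}/\det g_B$, this says $\log\det g_{a\bar b}-\log\det g_{Y_0}$ is constant on $Y$, i.e. $\mathrm{Ric}(\omega_Y)=\mathrm{Ric}(\omega_{Y_0})$. In particular $\mathrm{Ric}(\omega_Y)\ge0$, so by the Bochner formula on the compact K\"ahler manifold $(Y,\omega_Y)$ every holomorphic $1$-form --- in particular every $\overline{\eta^i}$, hence every $\eta^i$ --- is $\omega_Y$-parallel. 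Collecting the three blocks gives (\ref{exs}); and since $\hat\omega_{\mathbb C^m}$ has constant coefficients, $\omega_Y$ is $\nabla^{\omega_Y}$-parallel, and each $\eta^i$ is $\omega_Y$-parallel, $\omega$ is parallel for the product connection of $\hat\omega_{\mathbb C^m}\oplus\omega_Y$, which coincides with $\nabla_{\omega_{\mathbb C^m}+\omega_Y}$; thus $\nabla_{\omega_{\mathbb C^m}+\omega_Y}\omega=0$.
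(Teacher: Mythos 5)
Your Step 1 is sound and close in spirit to the paper's setup (the paper likewise applies Theorem \ref{mvhm} to a block of the metric, though to the inverse block $(g^{i\bar j})$ rather than to $(g_{i\bar j})$; both blocks are indeed bounded and subharmonic here), and your Step 3 is routine once constancy is granted. But Step 2 is where the whole proof lives, and what you offer there does not close. The Schur-complement inequality $\det\mathcal G\cdot\det\mathcal G^{*}\ge 1$ is pure linear algebra, valid for any positive definite matrix, and uses the Monge--Amp\`ere equation nowhere; and the mean value formula only tells you that the averages of $\mathcal G$ and $\mathcal G^{*}$ converge to their least upper bounds $A_0$ and $B_0$. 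Without an exact, $r$-independent evaluation of some average to play these limits against, "concavity of $\log\det$ should pin every average to its supremum" is not an argument: $\det A_0\det B_0\ge 1$ is perfectly consistent with $\mathcal G$ being nonconstant. You correctly flag this as the genuine difficulty, but you do not supply the idea that resolves it.

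The missing ingredient is topological. By K\"unneth, $[\omega]=[\pi_Y^{*}\Theta]$ for a closed $2$-form $\Theta$ on $Y$, so all fiber restrictions $i_z^{*}\omega$ lie in one fixed K\"ahler class; the paper takes $\omega_Y$ to be the Calabi--Yau representative of that class with $\mathrm{Ric}(\omega_Y)=\mathrm{Ric}(\omega_{Y_0})$, so that $\omega_Y^{n}=c\,\omega_{Y_0}^{n}$. The block-determinant lemma (Lemma \ref{bmdetinv}, applied to $g$ with the $Y$-block in the role of $A$, so that $\widetilde B=(g^{i\bar j})$) converts the Monge--Amp\`ere equation into the pointwise identity $\det(g^{i\bar j})=c\,(i_z^{*}\omega)^{n}/\omega_Y^{n}$. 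Since $\int_Y(i_z^{*}\omega)^{n}=\int_Y\omega_Y^{n}$ is a cohomological invariant, the fiberwise average of $\det(g^{i\bar j})$ equals the constant $c$ for every $z$, hence $\intav_{B_r(z_0)\times Y}\det(g^{i\bar j})\,dV_{g_0}=c$ exactly for all $r$. Played against $\det(g^{i\bar j})\le\det A$ and $\lim_{r}\intav\det(g^{i\bar j})=\det A$ (Corollary \ref{mvhm1}), this forces $\det(g^{i\bar j})\equiv\det A=c$, hence $(g^{i\bar j})\equiv A$, and moreover $(i_z^{*}\omega)^{n}=\omega_Y^{n}$, so $i_z^{*}\omega=\omega_Y$ by uniqueness on the compact fiber; the vanishing of all covariant derivatives then falls out of $\Delta_\omega g^{i\bar i}=0$. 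This is also why the inverse block is the right one to feed into Theorem \ref{mvhm}: its determinant is precisely the quantity whose fiber integral is fixed by cohomology, whereas your $\mathcal G$ couples through the determinant identity to the Schur complement (the "effective $Y$-metric"), whose fiber integral is not topologically controlled. Your alternative suggestion of a dilation argument is not developed enough to assess.
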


\medskip

Taking the construction of $\omega_Y$, $T_l$ and $S$ in Theorem \ref{hein17} (\cite{H17}) in consideration, Theorem \ref{mth} can be seen as a generalization of Theorem \ref{hein17}. Our proof relies on  the above mean value formula (i.e. Theorem \ref{mvhm}) and is very  different with Hein's. Theorem \ref{ltcma} also can be seen as an application of the mean value formula (\ref{mvf1}). We hope the mean value formula (\ref{mvf1}) to have  more applications in the study of K\"ahler geometry.

\hspace{0.4cm}

\section{A mean value formula for bounded subharmonic Hermitian matrix valued function}

\hspace{0.3cm}

In this section, we first give a proof of Theorem \ref{mvhm} and then give a new proof to  Theorem \ref{ltcma} by using Theorem \ref{mvhm} instead of the Calabi $\mathcal{C}^3$ estimate.

\medskip

{\bf A proof of Theorem \ref{mvhm}. }
For any vector $\xi\in \mathbb{C}^m$, define
\begin{equation}||\xi||_A^2=\xi A\xi^*.\end{equation}
By this definition and the condition on $A$, for any fixed $\xi\in \mathbb{C}^m$, $||\xi||_A^2$ is a bounded subharmonic function, then Theorem \ref{mvf} implies
\begin{equation}\label{mvlim1}\lim_{r\rightarrow \infty}\intav_{B_r(p)}||\xi||_A^2=\sup_M ||\xi||_A^2.\end{equation}
For $i=1,2,\cdots,n$, let $e_i$ be the i-th direction vector in $\mathbb{C}^n$. We have
\begin{equation}A_{ij}=\frac{||e_i+e_j||_A^2-||e_i-e_j||_A^2}{4}-\sqrt{-1}\frac{||e_i+\sqrt{-1}e_j||_A^2-||e_i-\sqrt{-1}e_j||_A^2}{4}.\end{equation}
Together with (\ref{mvlim1}), we assert that $\lim\limits_{r\rightarrow \infty}\intav_{B_r(p)} A$ exists. Let
\begin{equation}A_0=\lim_{r\rightarrow \infty}\intav_{B_r(p)} A,\end{equation}
then we have for any $\xi\in \mathbb{C}^m$,
\begin{equation}\xi A\xi^*=||\xi||_A^2\leq \lim_{r\rightarrow \infty}\intav_{B_r(p)}||\xi||_A^2=\xi A_0\xi^*.\end{equation}
This shows $A\leq A_0$.

\hfill $\Box$ \\

\medskip

We have the following simple corollary:

\medskip

\begin{corollary}\label{mvhm1} Let $A:M\rightarrow\mathbf{Hm}(m)$ satisfy the same condition of Theorem \ref{mvhm} and $A_0=\lim\limits_{r\rightarrow \infty}\intav_{B_r(p)} A$. Let $F$ be a bounded function on some neighborhood of the closure of $A(M)$ and continuous at $A_0$, then we have
\begin{equation}\lim_{r\rightarrow \infty}\intav_{B_r(p)} F(A)=F(A_0).\end{equation}
\end{corollary}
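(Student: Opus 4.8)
The plan is to combine the convergence of averages established in Theorem \ref{mvhm} with the decomposition of $F$ into a part controlled by continuity at $A_0$ and a part controlled by the uniform bound on $F$. Fix $p\in M$ and write $B_r=B_r(p)$. By Theorem \ref{mvhm} we know $\intav_{B_r}A\,dV_g\to A_0$, and moreover $A\le A_0$ pointwise on $M$, so the image $A(M)$ lies in the (bounded) set $\{X\in\mathbf{Hm}(m): X\le A_0,\ \|X\|\le \sup_M\|A\|\}$, whose closure $K$ is compact and contains $A_0$. Thus $F$ is defined on a neighborhood of $K$, bounded there by some constant $\Lambda$, and continuous at $A_0$.

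The core of the argument is a Chebyshev-type splitting. Fix $\varepsilon>0$. By continuity of $F$ at $A_0$, choose $\delta>0$ so that $\|X-A_0\|<\delta$ (with $X$ in the neighborhood where $F$ is defined) implies $|F(X)-F(A_0)|<\varepsilon$. Split
\begin{equation*}
\left|\intav_{B_r}F(A)\,dV_g - F(A_0)\right|\le \intav_{B_r}|F(A)-F(A_0)|\,dV_g = \intav_{B_r\cap E_r}(\cdots) + \intav_{B_r\setminus E_r}(\cdots),
\end{equation*}
where $E_r=\{x\in B_r: \|A(x)-A_0\|\ge\delta\}$. On $B_r\setminus E_r$ the integrand is $<\varepsilon$, contributing at most $\varepsilon$. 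On $E_r$ the integrand is at most $2\Lambda$, so that term is bounded by $2\Lambda\cdot \Vol(E_r)/\Vol(B_r)$. It remains to show this volume ratio tends to $0$ as $r\to\infty$.

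To bound $\Vol(E_r)/\Vol(B_r)$, I would use $A\le A_0$ together with the convergence of averages. Since $A_0-A\ge 0$ on $M$, we have $\operatorname{tr}(A_0-A)\ge 0$, and $\|A(x)-A_0\|\ge\delta$ forces $\operatorname{tr}(A_0-A)(x)\ge c\,\delta$ for some dimensional constant $c>0$ (on the compact set $K$ the Frobenius norm and the trace of the nonnegative matrix $A_0-A$ are comparable, since $A_0-A$ ranges over a bounded set of positive semidefinite matrices). Hence
\begin{equation*}
c\,\delta\cdot\frac{\Vol(E_r)}{\Vol(B_r)}\le \intav_{B_r}\operatorname{tr}(A_0-A)\,dV_g = \operatorname{tr}\!\left(A_0-\intav_{B_r}A\,dV_g\right)\longrightarrow \operatorname{tr}(A_0-A_0)=0,
\end{equation*}
using Theorem \ref{mvhm} in the last step. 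Therefore $\Vol(E_r)/\Vol(B_r)\to 0$, so $\limsup_{r\to\infty}|\intav_{B_r}F(A)-F(A_0)|\le\varepsilon$, and letting $\varepsilon\to0$ finishes the proof.

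The only subtle point is the comparison $\|X-A_0\|\ge\delta\Rightarrow \operatorname{tr}(A_0-X)\ge c\delta$ for $X\in K$: this is where nonnegativity of $A_0-A$ is essential, since for a general Hermitian matrix the trace gives no control of the norm, but for positive semidefinite matrices in a bounded region the trace (sum of eigenvalues) dominates a fixed multiple of the operator norm (largest eigenvalue), and on the bounded set $K$ one can even replace the operator norm by the Frobenius norm $\|\cdot\|$ at the cost of adjusting $c$. Everything else is the standard "continuity plus vanishing bad-set density" routine, and no additional geometric input beyond Theorem \ref{mvhm} is needed.
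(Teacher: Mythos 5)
Your proof is correct and follows essentially the same route as the paper: split $\intav_{B_r}|F(A)-F(A_0)|$ using continuity at $A_0$ on the good set and boundedness of $F$ plus a Chebyshev estimate on the bad set $\{\|A-A_0\|\ge\delta\}$, with the positivity $A_0-A\ge 0$ used to turn convergence of the averages into decay of the bad set's volume fraction. The only cosmetic difference is that you run Chebyshev on $\operatorname{tr}(A_0-A)$ with an explicit trace--norm comparison for positive semidefinite matrices, whereas the paper runs it on $\intav|A-A_0|$ and leaves that comparison implicit; your version actually spells out the step the paper glosses over.
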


\begin{proof}By the condition on $F$ we can find a positive constant $C$, such that
\begin{equation}F(A)\leq C,\end{equation}
on $M$. And for any $\varepsilon>0$, we can find some $\delta>0$ such that for any $q\in M$ satisfying $|A(q)-A_0|\leq \delta$, there holds
\begin{equation}|F(A(q))-F(A_0)|\leq \varepsilon.\end{equation}
For the mentioned $\varepsilon$ and $\delta$, we have
\begin{equation}\label{cor1}
\begin{aligned}\int_{B_r(p)}|F(A)-F(A_0)|&=\int_{B_r(p)\cap \{|A-A_0|\leq \delta\}}|F(A)-F(A_0)|+\int_{B_r(p)\cap \{|A-A_0|> \delta\}}|F(A)-F(A_0)|\\
&\leq \varepsilon \Vol(B_r(p)\cap \{|A-A_0|\leq \delta\})+C\Vol(B_r(p)\cap \{|A-A_0|> \delta\})\\
&\leq \varepsilon \Vol(B_r(p))+C\Vol(B_r(p)\cap \{|A-A_0|> \delta\}).
\end{aligned}\end{equation}
By Theorem \ref{mvhm}, $A\leq A_0$, so $A_0=\lim\limits_{r\rightarrow \infty}\intav_{B_r(p)} A$ implies
\begin{equation}\lim\limits_{r\rightarrow \infty}\intav_{B_r(p)} |A-A_{0}|=0.\end{equation}
Together with
\begin{equation}\Vol(B_r(p)\cap \{|A-A_0|> \delta\})\leq \delta^{-1}\int_{B_r(p)}|A-A_0|,\end{equation}
we have
\begin{equation}\label{cor2}\lim\limits_{r\rightarrow \infty}\frac{\Vol(B_r(p)\cap \{|A-A_0|> \delta\})}{\Vol(B_r(p))}=0.\end{equation}
(\ref{cor1}) and (\ref{cor2}) imply
\begin{equation}
\limsup\limits_{r\rightarrow \infty}\intav_{B_r(p)}|F(A)-F(A_0)|\leq \varepsilon.
\end{equation}
Let $\varepsilon\rightarrow 0$, then we get
\begin{equation}
\lim\limits_{r\rightarrow \infty}\intav_{B_r(p)}|F(A)-F(A_0)|=0.
\end{equation}
This concludes the proof.
\end{proof}

\hspace{0.4cm}

By Therorem \ref{mvhm} and Corollary \ref{mvhm1} we can give a new proof to Theorem \ref{ltcma}.

\medskip

{\bf A new proof to Theorem \ref{ltcma}. } We can write $\omega$ as
\begin{equation}\omega=\frac{\sqrt{-1}}{2}\sum\limits_{i,j=1}^n u_{i\bj}dz^i\wedge d\bar{z}^j,\end{equation}
where $(u_{i\bj})$ is a function valued in $\mathbf{Hm}(n)$.
Denote $(u^{i\bj})=(u_{i\bj})^{-1}$, $u_{i\bj k}=\frac{\partial}{\partial z^k}u_{i\bj}$, $u_{i\bj k\bl }=\frac{\partial}{\partial z^l}u_{i\bj k}$, etc.
Since $\omega$ is closed, we have
\begin{equation}u_{i\bj k}=u_{k\bj i},\qquad u_{i\bj\bk }=u_{i\bk\bj}.\end{equation}
By the equation $\omega$ satisfied, we have
\begin{equation}\label{cma1}\det(u_{i\bj})=1.\end{equation}
Direct computation shows
\begin{equation}\Delta_{\omega}u_{i\bj}=u^{k\bl}u^{p\bar{q}}u_{k\bar{q}i}u_{\bl p \bj}.\end{equation}
For any $\xi=(\xi^1,\xi^2,\cdots,\xi^n)\in \mathbb{C}^n$, consider the Hermitian quadratic form $F:\mathbb{C}^{3n}\times \mathbb{C}^{3n} \rightarrow \mathbb{R}$ defined by
\begin{equation}(A,B)\mapsto u^{i\bar{\alpha}}u^{\beta\bj}\xi^k\overline{\xi^{\gamma}}A_{ijk}\overline{B_{\alpha\beta\gamma}}.\end{equation}
By choosing a proper frame on $\mathbb{C}^n$, one can eaily check that $F$ is semi-positive-defined. So
\begin{equation}\xi^i(\Delta_{\omega}u_{i\bj})\overline{\xi^j}=u^{i\bar{\alpha}}u^{\beta\bj}\xi^k\overline{\xi^{\gamma}}u_{i\bj k}\overline{u_{\alpha \bar{\beta}\gamma}}\geq 0.\end{equation}
This implies that $(u_{i\bj})$ is subharmonic. The condition on $\omega$ implies that $(u_{i\bj})$ is bounded and $(\mathbb{C}^n,\omega)$ is a complete Ricci flat K\"ahler manifold . By Theorem \ref{mvhm} and Corollary \ref{mvhm1}, we can find a constant Hermitian matrix $A$ such that
\begin{equation}\label{cma2}(u_{i\bj})\leq A,\end{equation}
on $M$ and
\begin{equation}\lim_{r\rightarrow \infty}\intav_{B^{\omega}_r(O)}\det(u_{i\bj})\omega^n=\det A.\end{equation}
By (\ref{cma1}) and the previous equality, we have
\begin{equation}\det A =\det(u_{i\bj})= 1.\end{equation}
Since $(u_{i\bj})$ is positive-defined, the previous equality and (\ref{cma2}) imply $(u_{i\bj})$ is the constant function $A$.
This concludes the proof.

\hfill $\Box$ \\

\medskip

{\bf Remark: } {\it 1). To prove $(u_{i\bj})$ is subharmonic, besides direct computation, we can also use the following argument:
For any $\xi\in \mathbb{C}^m$, let $X_{\xi}=\xi_i\frac{\partial}{\partial z^i}$, then
\begin{equation}\xi_i u_{i\bj} \overline{\xi_j}=2|X_{\xi}|_{\omega}^2.\end{equation}
Using the Bochner formula for holomorhic fields and the fact that $Ric(\omega)= 0$, one can easily check that $\xi_i u_{i\bj}\overline{\xi_j}$ is subharmonic.

2). To prove Theorem \ref{ltcma}, one can also consider the $(u^{i\bj})=(u_{i\bj})^{-1}$. For any $\xi\in \mathbb{C}^m$, let $f_{\xi}=Re(\xi_iz^i)$, then
\begin{equation}\xi_iu^{i\bj}\overline{\xi_j}=|df_{\xi}|^2_{\omega}.\end{equation}
Clearly $f_{\xi}$ is a pluri-harmonic function and hence a harmonic function with respect to $\omega$. Using the Bochner formula and the fact that $Ric(\omega)= 0$ one can easily check that $\xi_iu^{i\bj}\overline{\xi_j}$ is subharmonic.}

\hspace{0.2cm}

\section{A Liouville theorem for the  complex Monge-Amp\`ere equation}

\hspace{0.3cm}

In this section, we obtain a Liouville theorem for the  complex Monge-Amp\`ere equation as an application of the mean value formula (\ref{mvf1}), i.e. we give a proof of Theorem \ref{mth}.
First we introduce the following lemma concerning the computation of determine of a blocked Hermitian matrix.

\medskip

\begin{lemma}\label{bmdetinv}Let $M$ be an invertible Hermitian matrix. If \begin{equation*}M=\left(\begin{array}{cc}A&C\\C^*&B\end{array}\right),\qquad M^{-1}=\left(\begin{array}{cc}\widetilde{A}&\widetilde{C}\\\widetilde{C}^*&\widetilde{B}\end{array}\right),\end{equation*}
where $A$ is invertible. Then
\begin{equation*}\det M=\det A \det \widetilde{B}^{-1}.\end{equation*}
\end{lemma}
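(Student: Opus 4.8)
The plan is to use the classical Schur complement identity for block matrices. Writing out the equation $M M^{-1} = \Id$ in block form gives the four relations
\[
A\widetilde{A}+C\widetilde{C}^* = \Id,\quad A\widetilde{C}+C\widetilde{B}=0,\quad C^*\widetilde{A}+B\widetilde{C}^*=0,\quad C^*\widetilde{C}+B\widetilde{B}=\Id .
\]
From the second relation, since $A$ is invertible, $\widetilde{C}=-A^{-1}C\widetilde{B}$. Substituting into the fourth relation yields $(B-C^*A^{-1}C)\widetilde{B}=\Id$, so the Schur complement $B-C^*A^{-1}C$ equals $\widetilde{B}^{-1}$ (in particular $\widetilde{B}$ is invertible).

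Next I would factor $M$ as a product of block-triangular matrices:
\[
M=\left(\begin{array}{cc}\Id&0\\C^*A^{-1}&\Id\end{array}\right)\left(\begin{array}{cc}A&0\\0&B-C^*A^{-1}C\end{array}\right)\left(\begin{array}{cc}\Id&A^{-1}C\\0&\Id\end{array}\right).
\]
Taking determinants, the two unipotent triangular factors have determinant $1$, so $\det M=\det A\,\det(B-C^*A^{-1}C)=\det A\,\det\widetilde{B}^{-1}$, which is the claim. (One should note $\det\widetilde B\neq 0$ so that $\det\widetilde B^{-1}$ makes sense; this follows from the Schur complement being invertible, as observed above, or simply because $M^{-1}$ is invertible and the first step forced $\widetilde B$ invertible.)

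There is no real obstacle here; the only thing to be careful about is the bookkeeping of which block is inverted and making sure the Hermitian hypothesis is only used to guarantee that $C^*$ (rather than some unrelated block) appears in the off-diagonal position, so that the stated form of $M^{-1}$ is consistent. If one prefers to avoid the explicit factorization, the same conclusion follows directly from the already-derived identities $\widetilde{B}^{-1}=B-C^*A^{-1}C$ together with the standard determinant formula $\det M=\det A\,\det(B-C^*A^{-1}C)$ for Schur complements; I would include the factorization above precisely so that this determinant formula is self-contained rather than quoted.
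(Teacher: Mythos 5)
Your proof is correct and follows essentially the same route as the paper: both rest on the block (Schur complement) factorization of $M$ to get $\det M=\det A\,\det(B-C^*A^{-1}C)$ and on the identity $\widetilde{B}=(B-C^*A^{-1}C)^{-1}$. The only cosmetic difference is that you obtain the latter by expanding $MM^{-1}=\Id$ blockwise, whereas the paper reads it off by inverting the same factorization.
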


\medskip

\begin{proof}Since $A$ is invertible, we have
\begin{equation*}\left(\begin{array}{cc}I&O\\-C^*A^{-1}&I\end{array}\right)\left(\begin{array}{cc}A&C\\C^*&B\end{array}\right)\left(\begin{array}{cc}I&-A^{-1}C\\O&I\end{array}\right)=\left(\begin{array}{cc}A&O\\O&B-C^*A^{-1}C\end{array}\right),\end{equation*}
this implies
\begin{equation}\label{bmdet} \det M=\det A \det(B-C^*A^{-1}C).\end{equation}
$M$ is invertible, so $B-C^*A^{-1}C$ is also invertible.
At the same time, we have
\begin{equation*}M^{-1}=\left(\begin{array}{cc}I&-A^{-1}C\\O&I\end{array}\right)\left(\begin{array}{cc}A&O\\O&B-C^*A^{-1}C\end{array}\right)^{-1}\left(\begin{array}{cc}I&O\\-C^*A^{-1}&I\end{array}\right),\end{equation*}
which implies
\begin{equation}\label{bminv} \widetilde{B}=(B-C^*A^{-1}C)^{-1}.\end{equation}
The required equality is a combination of (\ref{bmdet}) and (\ref{bminv}).
\end{proof}

\medskip

{\bf A proof to Theorem \ref{mth}. } Let $\pi_Y$ and $\pi_{\mathbb{C}^m}$ be the two projections:
\begin{equation}\pi_Y:\mathbb{C}^m\times Y\rightarrow Y, \qquad \pi_Y(z,y)=y,\end{equation}
\begin{equation}\pi_{\mathbb{C}^m}:\mathbb{C}^m\times Y\rightarrow \mathbb{C}^m, \qquad \pi_{\mathbb{C}^m}(z,y)=z.\end{equation}
By K\"unneth's formula (see e.g. Section 5 of \cite{GTM82}) and the result on the de Rahm cohomology groups of $\mathbb{C}^m$
\begin{equation}H_{dR}^k(\mathbb{C}^m)=\left\{\begin{array}{ll}
\mathbb{R}, & k=0,\\0,, &k\geq 1,
\end{array}
\right.
\end{equation}
there exists a closed real 2-form $\Theta$ on $Y$, such that
\begin{equation}[\omega]=[\pi_Y^*\Theta],\end{equation}
in the sense of de Rahm cohomology classes.
For any $z\in \mathbb{C}^n$, denote the embedding from $Y$ to $\mathbb{C}^m\times Y$
\begin{equation}y\mapsto (z,y),\end{equation}
by $i_z$.
For distinct $z_1,z_2\in \mathbb{C}^m$, since $\pi_Y\circ i_{z_1}=id_Y=\pi_Y\circ i_{z_2}$, we have
\begin{equation}\label{eq_kc}[i_{z_1}^*\omega]=[\Theta]=[i_{z_2}^*\omega].\end{equation}
Obviously all $i_z^*\omega$ are K\"ahler forms on $Y$, so (\ref{eq_kc}) shows that all $i_z^*\omega$ are in the same K\"ahler class. By Calabi-Yau theorem (see \cite{Y78}), there is a unique K\"ahler form $\omega_Y$ in this K\"ahler class satisfying $Ric(\omega_Y)=Ric(\omega_{Y_0})$ and consequently
\begin{equation}\omega_Y^n=c\omega_{Y_0}^n\end{equation}
for some positive constant $c$.

Now we can write the conditions on $\omega$ as follow
\begin{itemize}
\item[1).]$C^{-1}(\omega_{\mathbb{C}^m}+\omega_{Y})\leq \omega \leq C(\omega_{\mathbb{C}^m}+\omega_{Y})$, for some positive constant $C$;
\item[2).]$\omega^{n+m}=c^{-1}(\omega_{\mathbb{C}^m}+\omega_{Y})^{m+n}$;
\item[3).]for any $z\in \mathbb{C}^m$, $i_{z}^*\omega$ and $\omega_Y$ are in the same K\"ahler class.
\end{itemize}

\medskip

Denote $g_0$ and $g$ to be the Riemann metric associated with $\omega_{\mathbb{C}^m}+\omega_Y$ and $\omega$ respectively and let $g^{-1}$ be the metric on $T^*(\mathbb{C}^m\times Y)$ induced by $g$. Let $\{z^{i}\}_{i=1}^{m}$ be the standard complex coordinate system on $\mathbb{C}^m$, for $i,j=1,2,\cdots,m$, define
\begin{equation}u^{i\bj}=\frac{1}{2}g^{-1}(dz^i,d\bar{z}^j),\qquad u_{i\bj}=2g\left(\frac{\partial}{\partial z^i},\frac{\partial}{\partial \bar{z}^j}\right).\end{equation}
For any point $(z,y)$, we choose a complex normal coordinates system $\{z^{\alpha}\}_{\alpha=m+1}^{m+n}$ around $y$ with respect to $\omega_Y$. Computing under the coordinate system $\{z^a\}_{a=1}^{m+n}$, we have
\begin{equation}\label{luij}\Delta_{\omega}u^{i\bj}=g^{a_1\overline{b_1}}g^{a_2\overline{b_2}}g^{i\overline{b_3}}g^{a_3\bar{i}}g_{a_1\overline{b_3} a_2}g_{\overline{b_1}a_3\overline{b_2}}+g^{i\overline{b_3}}g^{a_3\bar{j}}R_{a_3\overline{b_3}},\end{equation}
\begin{equation}\label{luij2}\Delta_{\omega} u_{i\bj}=g^{a_1\overline{b_1}}g^{a_2\overline{b_2}}g_{a_1\overline{b_2}i}g_{a_2\overline{b_1}\bj}
.\end{equation}
where $g^{a\bar{b}}, g_{a\bar{b}}, g_{a\bar{b}c}, R_{a\bar{b}}$ are the coefficients of components of $g^{-1},g,\nabla_{g_0}g,Ric(g_0)$ respectively, $a,b,a_k,b_k=1,2,\cdots,m+n$ ($k=1,2,3$).

It is clear that for $i,j=1,2,\cdots,m$, $g^{i\bj}=u^{i\bj}$ and $g_{i\bj}=u_{i\bj}$. Namely
\begin{equation*}(g_{a\bar{b}})=\left(\begin{array}{cc} u_{i\bj} & g_{\alpha\bj}\\g_{i\bar{\beta}} & g_{\alpha\bar{\beta}}\end{array}\right),\qquad(g^{a\bar{b}})=\left(\begin{array}{cc} u^{i\bj} & g^{\alpha\bj}\\g^{i\bar{\beta}} & g^{\alpha\bar{\beta}}\end{array}\right).\end{equation*}
Furthermore,
$(u^{i\bj})$ and $(u_{i\bj})$ are bounded and uniformly positive-defined.

Clearly $Ric(g_0)\geq0$, so (\ref{luij}) implies that $(\Delta_{\omega}u^{i\bj})\geq 0$, i.e. $(u^{i\bj})$ is subharmonic with respect to $\omega$. Applying Theorem \ref{mvhm} and Corollary \ref{mvhm1}, we can find some constant Hermitian matrix $A$ such that
\begin{equation}\label{lim_mtr} (u^{i\bj})\leq A,\end{equation}
everywhere, and
\begin{equation}\label{lim_det}\lim_{r\rightarrow \infty}\intav_{B^{\omega}_{r}(x_0,z_0)}\det(u^{i\bj})dV_g=\det A.\end{equation}
We can find some sufficiently large constant $C$ such that for $r\gg \diam Y$,
\begin{equation*}B_r(z_0)\times Y\subset B^{\omega}_{Cr}(z_0,y_0)\subset  B_{C^2r}(z_0)\times Y,\end{equation*}
so we have
\begin{equation*}0\leq \int_{B_r(z_0)\times Y}(\det A -\det(u^{i\bj}))dV_{g_0}\leq c^{-1}\int_{B^{\omega}_{Cr}(x_0,z_0)}(\det A -\det(u^{i\bj}))dV_g,\end{equation*}
and
\begin{equation*}\Vol_{g_0}(B_r(z_0)\times Y)=C^{-4n}\Vol(B_{C^2r}(z_0)\times Y)\geq C^{-4n}c^{-1}\Vol_{\omega}(B^{\omega}_{Cr}(z_0,y_0)).\end{equation*}

Then we can obtain
\begin{equation*}0\leq\intav_{B_r(z_0)\times Y}(\det A -\det(u^{i\bj}))dV_{g_{0}}\leq C^{-4n}\left(\intav_{B^{\omega}_{Cr}(z_0,y_0)}(\det A -\det(u^{i\bj}))dV_g\right),\end{equation*}
consequently
\begin{equation}\lim_{r\rightarrow \infty}\intav_{B_r(z_0)\times Y}\det(u^{i\bj}) dV_{g_{0}} =\det A.\end{equation}
By computing under the local coordinate system $\{z^a\}_{a=1}^{m+n}$ mentioned above and applying Lemma \ref{bmdetinv}, we can check that
\begin{equation}\label{vol}(\det(u^{i\bj}))^{-1}\frac{(i_z^*\omega)^n}{\omega_Y^n}=\frac{\omega^{n+m}}{(\omega_{Y}+\omega_{ \mathbb{C}^m})^{m+n}}=c^{-1},\end{equation}
at any point $(z,y)$. Then we have
\begin{equation}\intav_{\{z\}\times Y} \det(u^{i\bj})\omega_Y^n =\intav_{\{z\}\times Y} c (i_z^*\omega)^n=c.\end{equation}
This tells
\begin{equation}\label{Xav}\intav_{B_r(z_0)\times Y}\det(u^{i\bj})dV_{g_0}=c,\end{equation}
for any $z_0$ and $r>0$.
Together with (\ref{lim_mtr}) and (\ref{lim_det}), we have
\begin{equation}\det(u^{i\bj})\leq \det A =c ,\end{equation}
and then
\begin{equation}\det(u^{i\bj})=\det A=c.\end{equation}
Since $(u^{i\bj})$ is positive-defined, using (\ref{lim_mtr}) again, we obtain
\begin{equation}\label{con1}(u^{i\bj})\equiv A.\end{equation}
By ($\ref{vol}$), we have
\begin{equation}(i_z^*\omega)^n=\omega_Y^n\end{equation}for any $z\in\mathbb{C}^m$.
We already know that $i_z^*\omega$ and $\omega_Y$ are in the same K\"ahler class, so
\begin{equation}\label{con2}i_z^*\omega=\omega_Y.\end{equation}
For any $i=1,2,\cdots,m$, by (\ref{luij}) and (\ref{con1}) we have
\begin{equation}\label{cenn1}0=\Delta_\omega u^{i\bi}\geq C^{-1}\sum\limits_{a_1,a_2=1}^{m+n}|g_{a_1\bi a_2}|^2.\end{equation}
(\ref{cenn1}) implies that $u_{i\bi}$ is a constant function, then consequently
\begin{equation}
\label{cenn2}0=\Delta_{\omega} u_{i\bi}\geq g^{a_1\overline{b_1}}g^{a_2\overline{b_2}}g_{a_1\overline{b_2}i}g_{a_2\overline{b_1}\bi}\geq C^{-1}\sum\limits_{a,b=1}^{m+n}|g_{a\bar{b}i}|^2.
\end{equation}
(\ref{cenn2}) implies $(u_{i\bj})$ is a constant matrix.
At the same time, (\ref{con2}) implies
\begin{equation}\label{cenn3}\sum\limits_{a=1}^{m+n}\sum\limits_{\alpha,\beta=m+1}^{m+n}|g_{\alpha\bar{\beta}a}|^2=0.\end{equation}
(\ref{cenn1}), (\ref{cenn2}) and (\ref{cenn3}) together show $\nabla_{g_0}g=0$.

\medskip

We define
\begin{equation}\eta^i=i_z^*(\textstyle{\frac{\partial}{\partial z^i}}\lrcorner \omega),\end{equation}
where $z\in\mathbb{C}^m$, $i=1,2\cdots,m$. Since $\nabla_{g_0}g=0$, this definition doesn't depend on the choice of $z$ and every $\eta^i$ is an $\omega_Y$-paralleled (0,1)-form.
The expression (\ref{exs}) can be easily checked under a local coordinate system. This concludes the proof of Theorem \ref{mth}.

\hfill $\Box$ \\

\hspace{0.5cm}

\end{document}